
\documentclass{ws-ijgmmp}

\usepackage{graphicx}
\usepackage[breaklinks=true,colorlinks=true,linkcolor=blue,urlcolor=blue,citecolor=blue]{hyperref}

\usepackage{bbold}

\usepackage{psfrag}

\usepackage{color}

\begin{document}

\markboth{Mikhail Panine, Achim Kempf}
{A Convexity Result in the Spectral Geometry of Conformally Equivalent Metrics on Surfaces}

%
\catchline{}{}{}{}{}
%

\title{A Convexity Result in the Spectral Geometry of Conformally Equivalent Metrics on Surfaces}

\author{Mikhail Panine}

\address{Department of Applied Mathematics, University of Waterloo,\\ 200 University Ave West, Waterloo, Ontario, N2L 3G1, Canada\\
\email{mpanine@uwaterloo.ca}}

\author{Achim Kempf}

\address{Department of Applied Mathematics, University of Waterloo,\\ 200 University Ave West, Waterloo, Ontario, N2L 3G1, Canada\\
\email{akempf@uwaterloo.ca} }

\maketitle

\begin{history}
\received{(Day Month Year)}
\revised{(Day Month Year)}
\end{history}




\begin{abstract}
Motivated by considerations of euclidean quantum gravity, we investigate a central question of spectral geometry, namely the question of reconstructability of compact Riemannian manifolds from the spectra of their Laplace operators. To this end, we study analytic paths of metrics that induce isospectral Laplace-Beltrami operators over oriented compact surfaces without boundary. Applying perturbation theory, we show that sets of conformally equivalent metrics on such surfaces contain no nontrivial convex subsets. This indicates that cases where the manifolds cannot be reconstructed from their spectra are highly constrained. 

\end{abstract}

\keywords{Spectral Geometry; Perturbation Theory; Spacetime Structure.}

\maketitle

\section{Introduction}

In the field of quantum gravity, an important challenge is to separate the physically relevant degrees of freedom in the (pseudo-)Riemannian metric from spurious degrees of freedom that merely express diffeomorphism invariance \cite{gibbons1993euclidean,rovelli2004quantum,kiefer2007quantum,landi1997general}. This makes geometric invariants of (pseudo-) Riemannian manifolds, such as the eigenvalues of Laplace or d'Alembert operators of particular interest. 
In euclidean quantum gravity, therefore, a central question of spectral geometry arises, namely the question of to what extent the geometry of compact Riemannian manifolds is determined by the spectra of their Laplace or Dirac operators. The  application of spectral geometric methods in quantum gravity is promising also because spectral geometry naturally combines the language of general relativity, differential geometry, with the language of quantum theory, functional analysis \cite{kempf2010spacetime,aasen2013shape,QGQC,panine2016towards}.

It is known that under certain conditions, reconstruction of the `shapes' of compact Riemannian manifolds from the geometric invariants given by spectra of their Laplace operators is possible \cite{panine2016towards,zelditch2009inverse,hezari2010inverse,zelditch1998revolution}. See also \cite{datchev2011inverse} for a review of such results. However, it is also known that the reconstruction can not always be unique. For reviews, see \cite{gordon2000survey,giraud2010hearing}. 
The question arises, therefore, whether counterexamples to the reconstructability of shapes from spectra, that is the existence of isospectral non-isometric manifolds, are in some sense rare.

In this paper, we give new evidence for the rarity of isospectral non-isometric manifolds by studying the properties of sets of isospectral metrics over a fixed differentiable manifold. Topological properties of isospectral sets of Riemannian metrics, mainly compactness, have attracted some interest over the years, see \cite{osgood1988compact,brooks1989isospectral,brooks1992compactness,chang1990isospectral} among others. Here, we  study the types of analytic paths that can occur within isospectral sets of conformally equivalent metrics. This is done via perturbation theoretic techniques analogous to those used in quantum mechanics. Consequently, we begin by discussing the main technique that we will here use, in the context of  quantum mechanical perturbation theory. Section \ref{BabyExample} is dedicated to this task. Then, in Section \ref{ChangingProduct} we derive the formulas for the first two orders of the eigenvalue corrections for a perturbation of both the studied operator and the inner product on the studied Hilbert space. Finally, in Section \ref{IsospectralFamilies} we apply the above to conformal perturbations of metrics on surfaces. This allows us to show the main result of this paper, namely that sets of conformally equivalent Riemannian metrics on surfaces with isospectral Laplace-Beltrami operators contain no convex subsets. The same holds for sets of inverse (``upper indices'') metrics.

\section{Strategy} \label{BabyExample}

In this section, we use standard (quantum mechanical) perturbation theory as an illustrative example for the rest of our study. We use this example both to set the notation and explain the strategy used to prove the main results of this paper. In what follows, we use $t$ as the perturbation parameter and we denote orders of perturbation of various objects by superscripts in parentheses. We assume the perturbation series to converge in some neighborhood of $t=0$. See \cite{rellich1969perturbation,kato1966perturbation,reed1978iv} for the relevant convergence criteria.

Let $H^{(0)}$ be a self-adjoint operator on a separable Hilbert space $\mathcal{H}$ with inner product $\langle \cdot , \cdot \rangle$ and let $\{\psi^{(0)}_{n},\lambda^{(0)}_{n}\}_{n}$ be its orthonormal eigenvectors and eigenvalues:

\begin{equation}
    H \psi^{(0)}_{n} = \lambda^{(0)}_{n}\psi^{(0)}_{n}
\end{equation}

\noindent We assume that $\mathcal{H}$  is spanned by the $\{\psi^{(0)}_{n}\}_{n}$. We perturb $H^{(0)}$ by adding a first order correction $tH^{(1)}$, assumed self-adjoint. 

\begin{equation}
\begin{aligned}
&H(t) = H^{(0)}+tH^{(1)} \\
&\psi_{n} (t) = \psi^{(0)}_{n}+t\psi^{(1)}_{n}+t^{2}\psi_{n}^{(2)}+...\\
&\lambda_{n} (t) = \lambda^{(0)}_{n}+t\lambda^{(1)}_{n}+t^{2}\lambda_{n}^{(2)}+...
\end{aligned}
\end{equation}

\noindent Even though, for now, only a first order correction to the operator is assumed, eigenvectors and eigenvalues are corrected to all orders. Supposing the spectrum of $H^{(0)}$ nondegenerate, standard perturbation theory provides the following formulas for the first eigenvalue corrections \cite{tannoudji1973mecanique,liboff2003introductory}:

\begin{equation} \label{UsualPerturbation}
    \begin{aligned}
        &\lambda_{n}^{(1)} = \langle \psi^{(0)}_{n}, H^{(1)}\psi^{(0)}_{n}\rangle \\
        &\lambda_{n}^{(2)} = \sum_{i \neq n} \frac{ \left| \langle \psi^{(0)}_{i}, H^{(1)}\psi^{(0)}_{n}\rangle \right|^{2}}{\lambda_{n}^{(0)}-\lambda_{i}^{(0)}}
    \end{aligned}
\end{equation}

\noindent Suppose that $H(t)$ is an isospectral family. Consequently, all corrections to the eigenvalues must vanish. In particular, in the first two orders,

\begin{equation}
    \begin{aligned}
        &0 = \langle \psi^{(0)}_{n}, H^{(1)}\psi^{(0)}_{n}\rangle \\
        &0 = \sum_{i \neq n} \frac{ \left| \langle \psi^{(0)}_{i}, H^{(1)}\psi^{(0)}_{n}\rangle \right|^{2}}{\lambda_{n}^{(0)}-\lambda_{i}^{(0)}}
    \end{aligned}
\end{equation}

\noindent The first order equation tells us that the diagonal of $H^{(1)}$ in the basis $\{\psi_{n}^{(0)}\}_{n}$ vanishes. The second equation restricts the possible values of the non-diagonal elements $\langle \psi^{(0)}_{i}, H^{(1)}\psi^{(0)}_{n}\rangle$ of $H^{(1)}$ in the basis $\{\psi_{n}^{(0)}\}_{n}$. This indicates that, under suitable additional hypotheses, one could force $H^{(1)}$ to vanish entirely.

Before proceeding any further with the analysis of these expressions, a short conceptual digression is in order. Notice that here perturbation theory is applied in a way opposite to the usual one.
Indeed, instead of using our knowledge of $H^{(1)}$ and the $\{\psi_{n}^{(0)} \}_{n}$ to compute the eigenvalue corrections, the vanishing of those corrections is used to deduce some properties of $H^{(1)}$.
No knowledge of the $\{\psi_{n}^{(0)} \}_{n}$ is required beyond the properties guaranteed by the spectral theorem.

Returning to the analysis of the perturbative expressions, consider the case where the spectrum of the unperturbed operator $H^{(0)}$ can be numbered such that $\lambda^{(0)}_k > \lambda^{(0)}_j$ if $k > j$ with the lowest eigenvalue denoted by $\lambda^{(0)}_{0}$. In other words, the operator is bounded below, this bound is attained and the spectrum cannot have certain types of accumulation points. Then, the elements $\langle \psi^{(0)}_{i}, H^{(1)}\psi^{(0)}_{n}\rangle$ are forced to vanish. Indeed, the second order correction to the lowest eigenvalue can be expressed as:

\begin{equation}
    0 = -\sum_{i > 0} \frac{ \left| \langle \psi^{(0)}_{i}, H^{(1)}\psi^{(0)}_{0}\rangle \right|^{2}}{\left|\lambda_{0}^{(0)}-\lambda_{i}^{(0)}\right|}
\end{equation}

\noindent This implies that $\left| \langle \psi^{(0)}_{i}, H^{(1)}\psi^{(0)}_{0}\rangle \right|^{2}$ vanishes for all $i > 0$. Since $H^{(1)}$ is self-adjoint, $\left| \langle \psi^{(0)}_{i}, H^{(1)}\psi^{(0)}_{0}\rangle \right|^{2} = \left| \langle \psi^{(0)}_{0}, H^{(1)}\psi^{(0)}_{i}\rangle \right|^{2} =0 $ for all $i>0$. Thus, for $n=1$ the second order correction becomes,

\begin{equation}
\begin{aligned}
    0 &= \frac{ \left| \langle \psi^{(0)}_{0}, H^{(1)}\psi^{(0)}_{1}\rangle \right|^{2}}{\lambda_{1}^{(0)}-\lambda_{0}^{(0)}} -\sum_{i > 1} \frac{ \left| \langle \psi^{(0)}_{i}, H^{(1)}\psi^{(0)}_{1}\rangle \right|^{2}}{\left|\lambda_{1}^{(0)}-\lambda_{i}^{(0)}\right|}\\
    &=-\sum_{i > 1} \frac{ \left| \langle \psi^{(0)}_{i}, H^{(1)}\psi^{(0)}_{1}\rangle \right|^{2}}{\left|\lambda_{1}^{(0)}-\lambda_{i}^{(0)}\right|}
\end{aligned}
\end{equation}

\noindent In turn, this implies that $\left| \langle \psi^{(0)}_{i}, H^{(1)}\psi^{(0)}_{1}\rangle \right|^{2} =0$ for $i >1$. Repeating the above process inductively, we obtain

\begin{equation}
    \langle \psi^{(0)}_{i}, H^{(1)}\psi^{(0)}_{j}\rangle =0 ~~ \forall i,j
\end{equation}

\noindent In other words, the perturbation is trivial: $H^{(1)}=0$. This is the type of result that we endeavour to obtain in the case of the perturbation of the metric of a $2-$dimensional Riemannian manifold. A few technical difficulties arise along the way, but the main inductive strategy remains unchanged.

Before proceeding further with our geometric goals, the same inductive process can be used to rule out certain isospectral second order corrections.
Consider a perturbation $H(t) = H^{(0)}+tH^{(1)}+t^{2}H^{(2)}+...$ with $H^{(i)}$ self-adjoint. The first order correction to the eigenvalues remains unchanged, while the second order correction becomes

\begin{equation} \label{SecondOrderQMPetrurbation}
        \lambda_{n}^{(2)} = \sum_{i \neq n} \frac{ \left| \langle \psi^{(0)}_{i}, H^{(1)}\psi^{(0)}_{n}\rangle \right|^{2}}{\lambda_{n}^{(0)}-\lambda_{i}^{(0)}} + \langle \psi_{n}^{(0)} , H^{(2)}\psi_{n}^{(0)}\rangle
\end{equation}

\noindent Notice that the above inductive strategy applies if $\langle \psi_{n}^{(0)}, H^{(2)} \psi_{n}^{(0)} \rangle \leq 0$ for all $n$, yielding $H^{(1)}=0$ and $\langle \psi_{n}^{(0)}, H^{(2)} \psi_{n}^{(0)} \rangle =0$ for all $n$.

So far, our discussion was limited to nondegenerate spectra, which is the generic case \cite{bando1983generic}. 
We will now extend the above to perturbations of spectra with finite degeneracy (which is guaranteed for Laplacians).
Specifically, we endeavor to preserve the form of the perturbative expansions for $\lambda_{n}^{(1)}$ and $\lambda_{n}^{(2)}$ in order to leave the inductive argument unchanged.
In general, the computation of perturbative corrections to the eigenvalues involves a series of diagonalizations of certain operators on nested subspaces.
This is known as the reduction process, the theory of which can be found in  \cite{kato1966perturbation,baumgartel1985analytic}.
For our purposes, this corresponds to a particular choice of basis within each eigenspace of $H^{(0)}$, as is explained below.

Let $P_{n}^{(0)}$ denote the projection onto the eigenspace of $\lambda_{n}^{(0)}$.
The first order corrections $\{\lambda_{ni}^{(1)} \}_{i}$ to $\lambda_{n}^{(0)}$ are the eigenvalues of the operator $\Lambda_{n}^{(1)} = P_{n}^{(0)}H^{(1)}P_{n}^{(0)}$ restricted to the eigenspace $P_{n}^{(0)} \mathcal{H}$. Notice the additional index $i$. It enumerates the families of eigenvalues $\lambda(t)$ of $H(t)$ that are equal at $t=0$ and stay degenerate to first order.

The second order corrections to the eigenvalues are obtained in a similar manner. Let $P_{ni}^{(1)}$ denote the projection onto the eigenspace of $\Lambda_{n}^{(1)}$ corresponding to the eigenvalue correction $\lambda_{ni}^{(1)}$. The second order corrections to the $\lambda_{n}^{(0)}+t \lambda_{ni}^{(1)}$ family of eigenvalues are given by the eigenvalues $\{ \lambda_{nij}^{(2)} \}_{j}$ of the following operator on $P_{ni}^{(1)} \mathcal{H}$.

\begin{equation}
\Lambda_{ni}^{(2)} = \sum_{k \neq n} \frac{P_{ni}^{(1)} H^{(1)} P_{k}^{(0)} H^{(1)}  P_{ni}^{(1)} }{\lambda_{n}^{(0)} - \lambda_{k}^{(0)} } + P_{ni}^{(1)} H^{(2)} P_{ni}^{(1)}
\end{equation}

Once again, the new index $j$ in $\lambda_{nij}^{(2)}$ keeps track of the eigenvalues that stay degenerate to second order. Similarly, let $P_{nij}^{(2)}$ denote the projection onto the eigenspace of $\Lambda_{ni}^{(2)}$ corresponding to the eigenvalue $\lambda_{nij}^{(2)}$.
 
Notice that both $\Lambda_{n}^{(1)}$ and $\Lambda_{ni}^{(2)}$ are symmetric operators on finite dimensional vector spaces $P_{n}^{(0)} \mathcal{H}$ and $P_{ni}^{(1)} \mathcal{H}$, respectively. Consequently, their eigenvectors on those spaces can be chosen to form an orthonormal basis.
More precisely, due to the nesting of relevant eigenspaces, the choice of orthonormal eigenvectors of the $\{ \Lambda_{ni}^{(2)} \}_{i}$ fixes the choice of eigenbasis of $\Lambda_{n}^{(1)}$, which, in turn, fixes the choice of eigenbasis of $H^{(0)}$. 
Assume that the $\{\psi^{(0)}_{n}\}_{n}$ form such a basis. Then, the eigenvalue corrections become

\begin{equation}
\begin{aligned}
\lambda_{ni}^{(1)} &= \langle \psi^{(0)}_{n}, \Lambda_{n}^{(1)} \psi^{(0)}_{n}\rangle ~~~~~~~\text{for}~\psi^{(0)}_{n} \in P_{nij}^{(2)}\mathcal{H} \subseteq P_{ni}^{(1)}\mathcal{H} \\
\lambda_{nij}^{(2)} &= \langle \psi^{(0)}_{n}, \Lambda_{ni}^{(2)} \psi^{(0)}_{n}\rangle ~~~~~~~\text{for}~\psi^{(0)}_{n} \in P_{nij}^{(2)}\mathcal{H}
\end{aligned}
\end{equation}

Consequently, the additional indices $i$ and $j$ can be dropped and the expressions for the eigenvalue corrections take a form similar to the nondegenerate case.

\begin{equation} \label{FullyGeneralEigenvalues}
\begin{aligned}
&\lambda_{n}^{(1)}  = \langle \psi_{n}^{(0)} , H^{(1)} \psi_{n}^{(0)} \rangle\\
&\lambda_{n}^{(2)} = \sum_{\substack{i\\ \lambda^{(0)}_{i} \neq \lambda_{n}^{(0)} } } \frac{ \left| \langle \psi^{(0)}_{i}, H^{(1)}\psi^{(0)}_{n}\rangle \right|^{2}}{\lambda_{n}^{(0)}-\lambda_{i}^{(0)}} + \langle \psi_{n}^{(0)} , H^{(2)}\psi_{n}^{(0)}\rangle
\end{aligned}
\end{equation}

Note that the presence of higher order corrections will not spoil the above formula, as those corrections will only induce an additional choice of eigenbasis within $P_{nij}^{(2)}\mathcal{H}$. In other words, a basis $\{\psi^{(0)}_{n}\}_{n}$ in which Equation \eqref{FullyGeneralEigenvalues} holds always exists.

The form of the eigenvalue corrections given by Equation \eqref{FullyGeneralEigenvalues} is nearly identical to that for the nondegenerate case. The only difference is the omission of the terms that would contain a division by zero from the second order correction to the eigenvalues. Notice that our choice of eigenbasis guarantees that the numerator $\left| \langle \psi^{(0)}_{i}, H^{(1)}\psi^{(0)}_{n}\rangle \right|^{2}$ of those omitted terms would be zero. Thus, the vanishing of those terms can be seen without invoking the inductive argument above. Consequently, our strategy can be applied in the same way as in the nondegenerate case.


\section{Perturbations with Changing Inner Product} \label{ChangingProduct}

As our goal is to study the perturbation of the metric on a Riemannian manifold $(\mathcal{M},g)$, we are forced to also consider the change in the $\mathcal{L}_{2}(\mathcal{M})$ inner product induced by the change of the metric. Similarly, we are forced to allow for the perturbations of the studied operator not to be symmetric with respect to the unperturbed inner product. This second consideration is much more important to our endeavours, as it modifies the formulas for the eigenvalue corrections. The change in inner product, on the other hand, only influences changes in eigenvectors. Indeed, the eigenvalues of an operator on a fixed linear space do not depend upon the inner product defined on it. The norm of a vector, however, does. Thus, by requiring the perturbed eigenvectors to be normalized with respect to the perturbed inner product, we introduce a new dependence into the perturbation formulas. This dependence can be seen in the formula for the first order correction to the eigenvectors, which we provide as an intermediary step in the calculation of second order corrections to the eigenvalues (Equation \eqref{FirstOrderPsi}).

Our analysis makes no reference to the geometric setting that motivates our study and thus remains general. We assume all relevant power series to converge in some neighborhood of $t=0$. Our procedure follows the one used to derive the standard perturbation results, see \cite{tannoudji1973mecanique,liboff2003introductory,messiah1999quantum}.

Let $\mathcal{H}$ be a Hilbert space with unperturbed inner product $\langle \cdot , \cdot \rangle$. Suppose that $H^{(0)}$, $H^{(1)}$, $H^{(2)}$, $G^{(1)}$ and $G^{(2)}$ are operators on $\mathcal{H}$ such that $H^{(0)}$ is self-adjoint with respect to the unperturbed inner product and has nondegenerate spectrum. The operators $G^{(1)}$ and $G^{(2)}$ are used to perturb the inner product. For our purposes, only the first two orders of perturbation matter. Let $\langle \cdot , \cdot \rangle_{t}$ be a family of inner products on $\mathcal{H}$ defined as:

\begin{equation}
    \langle \cdot , \cdot \rangle_{t} = \langle \cdot , \cdot \rangle+ t\langle \cdot , G^{(1)}\cdot \rangle+t^{2}\langle \cdot , G^{(2)}\cdot \rangle
\end{equation}

\noindent The perturbed operator, eigenvectors and eigenvalues are given by

\begin{equation}
\begin{aligned}
&H(t) = H^{(0)}+tH^{(1)}+t^{2}H^{(1)} \\
&\psi_{n} (t) = \psi^{(0)}_{n}+t\psi^{(1)}_{n}+t^{2}\psi_{n}^{(2)}+...\\
&\lambda_{n} (t) = \lambda^{(0)}_{n}+t\lambda^{(1)}_{n}+t^{2}\lambda_{n}^{(2)}+...
\end{aligned}
\end{equation}

\noindent Order by order, up to second order, the eigenvalue equation becomes

\begin{equation} \label{PerturbedEigenvalueProblem}
\begin{split}
    H^{(0)}\psi^{(0)}_{n} &= \lambda_{n}^{(0)} \psi_{n}^{(0)}\\
    H^{(0)}\psi^{(1)}_{n} + H^{(1)}\psi^{(0)}_{n} &= \lambda_{n}^{(0)} \psi_{n}^{(1)}+\lambda_{n}^{(1)} \psi_{n}^{(0)}\\
    H^{(0)}\psi^{(2)}_{n} + H^{(1)}\psi^{(1)}_{n} + H^{(2)}\psi^{(0)}_{n} &= \lambda_{n}^{(0)} \psi_{n}^{(2)}+\lambda_{n}^{(1)} \psi_{n}^{(1)} +\lambda_{n}^{(2)} \psi_{n}^{(0)}
\end{split}
\end{equation}

\noindent Requiring that the eigenstates are normalized for all $t$ in the convergence radius of the series, i.e. that $\langle \psi_{n}(t), \psi_{n}(t)\rangle_{t} = 1$ yields, after splitting order by order

\begin{equation} \label{NormalizationCondition}
\begin{split}
    \langle \psi^{(0)}_{n}, \psi^{(0)}_{n} \rangle & =1 \\
    \langle \psi^{(0)}_{n}, \psi^{(1)}_{n} \rangle & = -\frac{1}{2} \langle \psi^{(0)}_{n}, G^{(1)}\psi^{(0)}_{n} \rangle \\
    \langle \psi^{(0)}_{n}, \psi^{(2)}_{n} \rangle & = -\frac{1}{2} \left( \langle \psi^{(1)}_{n}, \psi^{(1)}_{n} \rangle +
         \langle \psi^{(0)}_{n}, G^{(1)}\psi^{(1)}_{n} \rangle +\langle \psi^{(1)}_{n}, G^{(1)}\psi^{(0)}_{n} \rangle \right.\\
         &\left. + \langle \psi^{(0)}_{n}, G^{(2)}\psi^{(0)}_{n} \rangle  \right)
\end{split}
\end{equation}

\noindent The usual formulas are recovered by setting $G^{(1)}=G^{(2)}=0$. Following the usual procedure, one projects the second line (first order) of Equation \eqref{PerturbedEigenvalueProblem} onto $\psi_{n}^{(0)}$ with respect to the unperturbed inner product to obtain a formula for $\lambda_{n}^{(1)}$. 

\begin{equation} \label{FirstOrderLambda}
\lambda_{n}^{(1)} = \langle \psi_{n}^{(0)}, H^{(1)} \psi_{n}^{(0)} \rangle
\end{equation}

\noindent Similarly, projecting the same equation onto $\psi_{i}^{(0)}$ for $i \neq n$, together with Equation \eqref{NormalizationCondition} yields an expression for the first order correction to the eigenvector

\begin{equation} \label{FirstOrderPsi}
\psi_{n}^{(1)} = \sum_{i \neq n} \frac{\langle \psi_{i}^{(0)}, H^{(1)} \psi_{n}^{(0)} \rangle}{\lambda_{n}^{(0)}-\lambda_{i}^{(0)}} \psi_{i}^{(0)} - \frac{1}{2}\langle \psi_{n}^{(0)}, G^{(1)} \psi_{n}^{(0)} \rangle \psi_{n}^{(0)}
\end{equation}

\noindent Finally, projecting the third line (second order) of Equation \eqref{PerturbedEigenvalueProblem} onto $\psi_{n}^{(0)}$ and using the equations \eqref{NormalizationCondition}, \eqref{FirstOrderLambda} and the
above formula for $\psi_{n}^{(1)}$, one obtains the second order correction to the eigenvalues. 

\begin{equation} \label{SecondOrderLambda}
\lambda_{n}^{(2)} = \sum_{i \neq n} \frac{\langle \psi_{i}^{(0)}, H^{(1)} \psi_{n}^{(0)} \rangle \langle \psi_{n}^{(0)}, H^{(1)} \psi_{i}^{(0)} \rangle}{\lambda_{n}^{(0)} - \lambda_{i}^{(0)}} +\langle \psi_{n}^{(0)}, H^{(2)} \psi_{n}^{(0)} \rangle
\end{equation}

\noindent Notice that all dependence upon $G^{(1)}$ and $G^{(2)}$ drops out. As mentioned above, this is not surprising, as the eigenvalues of an operator do not depend upon the inner product. Compare this new expression to the usual perturbation formula given by Equation \eqref{SecondOrderQMPetrurbation}. The key difference is that $H^{(1)}$ can fail to be symmetric with respect to the unperturbed inner product. This has two important consequences. First, $\langle \psi_{i}^{(0)}, H^{(1)} \psi_{n}^{(0)} \rangle \langle \psi_{n}^{(0)}, H^{(1)} \psi_{i}^{(0)} \rangle$ is not guaranteed to be nonnegative, unlike $| \langle \psi_{i}^{(0)}, H^{(1)} \psi_{n}^{(0)} \rangle |^{2}$. This means that the main argument of Section \ref{BabyExample} cannot be directly applied in this situation. Second, in the case of a finitely degenerate unperturbed spectrum, it is no longer guaranteed that one can choose the eigenbasis of $H^{(0)}$ in a way that preserves the form of equation \eqref{SecondOrderLambda}.
Neither of those issues arise when $H^{(1)}=Q^{(1)}H^{(0)}$ and $H^{(2)}=Q^{(2)}H^{(0)}$ for some symmetric operators $Q^{(1)}$ and $Q^{(2)}$, which is the case for the perturbations studied below.

\section{Isospectral Families of Metrics on Surfaces} \label{IsospectralFamilies}

Consider an oriented compact connected Riemannian manifold $(\mathcal{M},g)$ of dimension $2$ without boundary. In this section we study analytic families of conformally equivalent isospectral metrics on such manifolds. Recall that two metrics $g_{1}$ and $g_{2}$ are conformally equivalent if there exists an $f \in C^{\infty}(\mathcal{M})$ such that $g_{1}=fg_{2}$. For a proof of the existence of isospectral non-isometric conformally equivalent metrics, see \cite{brooks1989isospectral}.

Let $g^{(0)}$ denote the unperturbed metric and let $\Delta^{(0)}$ denote the induced unperturbed Laplacian. Let $\bar{g}^{(0)}$ denote the induced metric on forms (``upper indices", or inverse metric). For now, we will consider perturbations of the inverse metric $\bar{g}^{(0)}$ rather than of the metric $g^{(0)}$, as this simplifies the computations. Let $\bar{g} = \bar{g}^{(0)} \left( 1 + \sum_{i=1}^{\infty} t^{i} f^{(i)} \right)$ be a conformal perturbation of the inverse metric, where the $f^{(i)}$ are smooth functions. In dimension $2$, it is well-known that this induces the following perturbation of the Laplace-Beltrami operator:

\begin{equation}
\Delta = \left( 1 + \sum_{i=1}^{\infty} t^{i} f^{(i)} \right) \Delta^{(0)} .
\end{equation}

\noindent Denoting the $i^{th}$ order correction to the Laplace-Beltrami operator by $\Delta^{(i)}$, the first two corrections to the Laplace-Beltrami operator are

\begin{equation} \label{ConformalLaplacian2d}
\begin{split}
    \Delta^{(1)} \psi &= f^{(1)} \left( \Delta^{(0)} \psi \right) \\
        \Delta^{(2)} \psi &= f^{(2)} \left( \Delta^{(0)} \psi \right).
\end{split}
\end{equation}

\noindent Since the corresponding perturbation series for the eigenfunctions and eigenvalues are known to converge (see \cite{bando1983generic}), one can use Equations \eqref{FirstOrderLambda} and \eqref{SecondOrderLambda}, together with the special choice of eigenbasis described in Section \ref{BabyExample}, to obtain the following formulas for the first two eigenvalue corrections:

\begin{equation}
\begin{split}
    \lambda^{(1)}_{n} &= \lambda_{n}^{(0)} \langle \psi_{n}^{(0)} , f^{(1)}\psi_{n}^{(0)} \rangle\\
    \lambda_{n}^{(2)} &= \sum_{\substack{i\\ \lambda^{(0)}_{i} \neq \lambda_{n}^{(0)} }} \frac{\lambda_{i}^{(0)}\lambda_{n}^{(0)} \left| \langle \psi_{i}^{0}, f^{(1)} \psi_{n}^{(0)} \rangle \right|^{2} }{\lambda_{n}^{(0)} - \lambda_{i}^{(0)}} + \lambda_{n}^{(0)} \langle \psi_{n}^{(0)}, f^{(2)} \psi_{n}^{(0)} \rangle .
\end{split}
\end{equation}

\noindent Notice that for a perturbation such that $f^{(2)}=0$ the above takes the desired form for our inductive argument to hold for all eigenspaces except for the one with eigenvalue zero. Indeed, for all $i,j \geq 1$, the inductive argument of Section \ref{BabyExample} yields $\langle \psi_{i}^{(0)}, f^{(1)} \psi_{j}^{(0)} \rangle=0$. Only the case of either $i$ or $j$ being zero remains. Notice that, since the zero eigenspace is that of constant functions, $\langle \psi_{i}^{(0)}, f^{(1)} \psi_{j}^{(0)} \rangle=0 ~\forall i,j \geq 1$ implies that $f^{(1)}\psi_{n}^{(0)}$ is constant for all $n \geq 1$. Pick $n$ such that $\psi_{n}^{(0)}$ vanishes at some $p \in \mathcal{M}$. Such eigenfunctions exist since eigenfunctions are smooth and functions must change sign in order to be orthogonal to the constant eigenfunction corresponding to $\lambda_{0}^{(0)}=0$. Then, $f^{(1)}(p)\psi_{n}^{(0)} (p) =0$, which implies that $f^{(1)}\psi_{n}^{(0)}=0$, since it is constant. Yet, $\psi_{n}^{(0)} \neq 0$ , thus $f^{(1)}=0$, as desired.

Notice that the above reasoning also holds for the case where $f^{(2)} \leq 0$ with isospectrality to the first two orders implying $f^{(1)}=0$ and $\langle \psi_{n}^{(0)}, f^{(2)} \psi_{n}^{(0)} \rangle=0$ for all $n$. The above can be compactly stated as follows.

\begin{lemma} \label{BasicGeomResult}
Let $(\mathcal{M},g)$ be a compact, connected, boundaryless, oriented Riemannian manifold of dimension $2$. Let $\bar{g} = \bar{g}^{(0)} \left( 1 + \sum_{i=1}^{\infty} t^{i} f^{(i)} \right)$ be a conformal perturbation of the inverse metric with $f^{(2)} \leq 0$. If the perturbation is isospectral to the first two orders, then $f^{(1)}=0$ and $\langle \psi_{n}^{(0)}, f^{(2)} \psi_{n}^{(0)}\rangle=0$ for all $n$.
\end{lemma}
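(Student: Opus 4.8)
The plan is to run the inductive argument of Section \ref{BabyExample} on the second-order eigenvalue correction formula, using crucially that the perturbation of the Laplacian has the special form $\Delta^{(i)}=f^{(i)}\Delta^{(0)}$, so that the off-diagonal ``matrix elements'' appearing in \eqref{SecondOrderLambda} are $\langle\psi_i^{(0)},H^{(1)}\psi_n^{(0)}\rangle\langle\psi_n^{(0)},H^{(1)}\psi_i^{(0)}\rangle = \lambda_i^{(0)}\lambda_n^{(0)}|\langle\psi_i^{(0)},f^{(1)}\psi_n^{(0)}\rangle|^2$, which is manifestly real and has a sign controlled by $\lambda_i^{(0)}\lambda_n^{(0)}$. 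This sidesteps the obstruction noted at the end of Section \ref{ChangingProduct} (that $H^{(1)}$ need not be symmetric for the unperturbed inner product), since $H^{(1)}=f^{(1)}\Delta^{(0)}$ is of the form $QH^{(0)}$ with $Q$ multiplication by $f^{(1)}$.

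First I would record the eigenvalue corrections. Isospectrality to first order gives $\lambda_n^{(0)}\langle\psi_n^{(0)},f^{(1)}\psi_n^{(0)}\rangle=0$ for all $n$; isospectrality to second order, together with $f^{(2)}\le 0$ (so the diagonal term $\lambda_n^{(0)}\langle\psi_n^{(0)},f^{(2)}\psi_n^{(0)}\rangle$ has a controlled sign), gives the constraint on the double sum. On the surface the spectrum of $\Delta^{(0)}$ is $0=\lambda_0^{(0)}<\lambda_1^{(0)}\le\lambda_2^{(0)}\le\cdots$ with finite-dimensional eigenspaces; within each eigenspace I choose the orthonormal basis that diagonalizes the restriction of the symmetric operator $f^{(1)}$, which kills the dangerous $\lambda_i^{(0)}=\lambda_n^{(0)}$ terms in \eqref{SecondOrderLambda} exactly as in the degenerate case discussed earlier. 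Then, restricting attention to indices with strictly positive eigenvalue, the quantities $\lambda_i^{(0)}\lambda_n^{(0)}$ are all positive, so for the smallest positive eigenvalue the second-order equation reads $0 = -\sum_{\lambda_i^{(0)}>\lambda_n^{(0)}}\lambda_i^{(0)}\lambda_n^{(0)}|\langle\psi_i^{(0)},f^{(1)}\psi_n^{(0)}\rangle|^2/|\lambda_n^{(0)}-\lambda_i^{(0)}| + (\text{nonpositive diagonal term})$, forcing each term to vanish separately; self-adjointness of $f^{(1)}\Delta^{(0)} $—or simply reality of $f^{(1)}$—propagates the vanishing to the lower index, and induction up the spectrum yields $\langle\psi_i^{(0)},f^{(1)}\psi_j^{(0)}\rangle=0$ for all $i,j\ge 1$, and also $\langle\psi_n^{(0)},f^{(2)}\psi_n^{(0)}\rangle=0$ for every $n$ with $\lambda_n^{(0)}>0$.

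Next I would upgrade ``$\langle\psi_i^{(0)},f^{(1)}\psi_j^{(0)}\rangle=0$ for all $i,j\ge1$'' to ``$f^{(1)}=0$'' by the density/support argument already sketched in the text preceding the lemma: since the $\lambda_0^{(0)}=0$ eigenspace consists of the constants, the vanishing of all those inner products means $f^{(1)}\psi_j^{(0)}$ is orthogonal to every $\psi_i^{(0)}$ with $i\ge 1$, hence is constant, for every $j\ge 1$. Picking a nonconstant eigenfunction $\psi_n^{(0)}$ (these exist because a nonconstant smooth function orthogonal to the constants must change sign and hence vanish somewhere), evaluate $f^{(1)}\psi_n^{(0)}$ at a zero $p$ of $\psi_n^{(0)}$: the constant value is $0$, so $f^{(1)}\psi_n^{(0)}\equiv 0$, and since $\psi_n^{(0)}\not\equiv 0$ and eigenfunctions vanish only on a measure-zero set, $f^{(1)}\equiv 0$. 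With $f^{(1)}=0$ in hand, the first-order eigenvector correction and hence the remaining cross terms drop out, leaving precisely $\langle\psi_n^{(0)},f^{(2)}\psi_n^{(0)}\rangle=0$ for all $n$ (the $n=0$ case being automatic once we know it holds for $n\ge1$, or handled directly since $\psi_0^{(0)}$ is constant).

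I expect the main obstacle to be the bookkeeping around the zero eigenvalue: the factors $\lambda_n^{(0)}$ multiplying everything mean the inductive argument only directly controls matrix elements between strictly positive eigenspaces, so the passage from that partial information to $f^{(1)}=0$ genuinely requires the separate pointwise-vanishing argument rather than following formally from the inductive step. A secondary point to be careful about is justifying the diagonalizing choice of basis within each positive eigenspace and checking it is compatible across the whole induction (it is, because the choice in one eigenspace does not affect the matrix elements involving other eigenspaces), together with confirming that convergence of the perturbation series — guaranteed by \cite{bando1983generic} — legitimately licenses the order-by-order vanishing of all eigenvalue corrections.
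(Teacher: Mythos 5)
Your proposal is correct and follows essentially the same route as the paper: derive the corrections $\lambda_n^{(1)}=\lambda_n^{(0)}\langle\psi_n^{(0)},f^{(1)}\psi_n^{(0)}\rangle$ and $\lambda_n^{(2)}=\sum_{i\neq n}\lambda_i^{(0)}\lambda_n^{(0)}|\langle\psi_i^{(0)},f^{(1)}\psi_n^{(0)}\rangle|^2/(\lambda_n^{(0)}-\lambda_i^{(0)})+\lambda_n^{(0)}\langle\psi_n^{(0)},f^{(2)}\psi_n^{(0)}\rangle$, run the Section~\ref{BabyExample} induction up the positive spectrum using $f^{(2)}\le 0$ and the fact that $H^{(1)}=f^{(1)}\Delta^{(0)}$ makes the cross terms nonnegative, and then use the constancy-plus-nodal-point argument to extend the conclusion past the zero eigenspace and obtain $f^{(1)}=0$. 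Your handling of the degenerate eigenspaces and of the $n=0$ case for $f^{(2)}$ is in fact slightly more explicit than the paper's.
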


This result has an interesting consequence for the structure of sets of conformally equivalent isospectral metrics in dimension $2$.

\begin{theorem} \label{InverseConcave}
Let $\bar{\mathcal{G}}$ be a set of conformally equivalent inverse metrics over a surface $\mathcal{M}$ assumed connected, oriented, compact and without boundary. Suppose that the induced Laplace-Beltrami operators are isospectral. Then, $\bar{\mathcal{G}}$ contains no convex subset composed of more than one element.
\end{theorem}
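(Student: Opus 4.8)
The plan is to argue by contradiction, using Lemma \ref{BasicGeomResult} together with the observation that convexity forces the existence of a two-sided analytic family to which the lemma applies. Suppose $\bar{\mathcal{G}}$ contains a convex subset with two distinct elements $\bar{g}_0$ and $\bar{g}_1$. Since all metrics in $\bar{\mathcal{G}}$ are conformally equivalent, we may write $\bar{g}_1 = \bar{g}_0(1+h)$ for a smooth function $h$ on $\mathcal{M}$ that is not identically zero; moreover $1+h>0$ everywhere since $\bar g_1$ is a genuine metric. By convexity, for every $s\in[0,1]$ the metric $\bar g_s := (1-s)\bar g_0 + s\bar g_1 = \bar g_0(1+sh)$ lies in $\bar{\mathcal{G}}$, hence all the $\bar g_s$ share the same Laplace spectrum. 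The key point is that this is not merely a one-sided deformation: we can translate the parameter so that the basepoint sits in the interior. Fix any $s_0\in(0,1)$, set $\bar g^{(0)} := \bar g_{s_0} = \bar g_0(1+s_0 h)$, and reparametrize by $t$ via $s = s_0 + t$. Then near $t=0$ we have
\begin{equation}
\bar g_{s_0+t} = \bar g_0\bigl(1+(s_0+t)h\bigr) = \bar g^{(0)}\left(1 + t\,\frac{h}{1+s_0 h}\right),
\end{equation}
which is exactly a conformal perturbation of $\bar g^{(0)}$ of the form treated in the excerpt, with first-order correction $f^{(1)} = h/(1+s_0 h)$ and all higher corrections $f^{(i)}=0$ for $i\ge 2$. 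In particular $f^{(2)}=0\le 0$, so the hypothesis of Lemma \ref{BasicGeomResult} is satisfied, and since $\bar g_{s_0+t}$ is isospectral to $\bar g^{(0)}$ for all small $t$ (indeed for all admissible $t$), the perturbation is isospectral to all orders — in particular to the first two.

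Applying Lemma \ref{BasicGeomResult} gives $f^{(1)} = h/(1+s_0 h) = 0$ on $\mathcal{M}$. Since $1+s_0 h$ is a nowhere-vanishing smooth function, this forces $h\equiv 0$, hence $\bar g_1 = \bar g_0$, contradicting the assumption that the convex subset had more than one element. Therefore no such convex subset exists. The argument uses only Lemma \ref{BasicGeomResult} and the elementary fact that a convex combination of two conformally related inverse metrics is again conformally related to either of them with a linear (hence real-analytic in the parameter) conformal factor, so that the analyticity and convergence hypotheses underlying the perturbative formulas are automatically met.

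The only real subtlety — and the step I would be most careful about — is verifying that the reparametrized family genuinely falls under the scope of the perturbative machinery of Sections \ref{ChangingProduct} and \ref{IsospectralFamilies}: one needs $\bar g^{(0)}$ to be a smooth Riemannian metric (clear, since $1+s_0 h>0$), one needs the conformal factor $1+tf^{(1)}$ to stay positive for $t$ in a neighborhood of $0$ (clear by continuity, shrinking the neighborhood if necessary), and one needs the eigenvalue perturbation series to converge, which is guaranteed by \cite{bando1983generic} as invoked in the excerpt. Once these are in place, the contradiction is immediate. I would also remark explicitly that the same proof applies verbatim to convex sets of ordinary (``lower index'') metrics, since in dimension $2$ a conformal perturbation of $g$ induces a conformal perturbation of $\bar g$ and vice versa, and a convex combination $(1-s)g_0 + s g_1$ of conformally equivalent metrics is again conformally equivalent to $g_0$; this yields the closing sentence of the paper's abstract.
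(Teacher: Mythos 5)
Your proof of the stated theorem is correct and follows essentially the same route as the paper: convexity yields a linear, hence purely first-order, conformal path of inverse metrics, and Lemma \ref{BasicGeomResult} with $f^{(2)}=0$ forces $f^{(1)}=0$; your explicit reparametrization $f^{(1)}=h/(1+s_0 h)$ just makes precise the paper's remark that the path can be expanded about any interior basepoint. One caveat: your closing claim that the same proof applies \emph{verbatim} to ordinary (lower-index) metrics is not right. A linear path $g_0(1+sh)$ in the metric induces the perturbation $\bar g^{(0)}\bigl(1-tf^{(1)}+t^2(f^{(1)})^2-\cdots\bigr)$ of the inverse metric, whose second-order coefficient $(f^{(1)})^2\ge 0$ violates the hypothesis $f^{(2)}\le 0$ of Lemma \ref{BasicGeomResult}; the paper's Theorem \ref{DirectConcave} therefore requires a separate computation showing that the $\langle\psi_n^{(0)},(f^{(1)})^2\psi_n^{(0)}\rangle$ term recombines with the second-order sum into an expression of the sign-definite form needed for the inductive argument.
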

\begin{proof}
Assume $\bar{g}_{1}$ and $\bar{g}_{2}$ are distinct elements of a nonempty convex subset of $\bar{\mathcal{G}}$. Then, the line $\tau \bar{g}_{1} + (1-\tau) \bar{g}_{2}$ for $\tau \in [0,1]$ must be contained in $\bar{\mathcal{G}}$. The path $\tau \bar{g}_{1} + (1-\tau) \bar{g}_{2}$ can be studied perturbatively around any $\tau_{0} \in [0,1]$. As shown in \cite{bando1983generic} there will always exist some $\varepsilon \geq 0$ such that the perturbations series converge when $|\tau-\tau_{0}| < \varepsilon$. Notice that this corresponds to a strictly first order isospectral perturbation, i.e. $f^{(i)}=0$ for $i > 1$ and $\lambda^{(i)}=0$ for all $i$. By Lemma \ref{BasicGeomResult}, $f^{(1)} =0$, resulting in $\bar{g}_{1} = \bar{g}_{2}$, a contradiction. Thus, $\bar{\mathcal{G}}$ contains no convex subsets of more than one element. 
\end{proof}

An analogous result holds for isospectral sets in the space of metrics rather than inverse metrics. 

\begin{theorem} \label{DirectConcave}
Let $\mathcal{G}$ be a set of conformally equivalent metrics over a surface $\mathcal{M}$ assumed connected, oriented, compact and without boundary. Suppose that the induced Laplace-Beltrami operators are isospectral. Then, $\mathcal{G}$ contains no convex subset composed of more than one element.
\end{theorem}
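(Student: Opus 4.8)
The plan is to reduce Theorem \ref{DirectConcave} to Theorem \ref{InverseConcave} by understanding how a straight-line (convex) path in the space of metrics corresponds to a path in the space of inverse metrics. In dimension $2$, a conformal perturbation of the metric $g = g^{(0)}(1 + \sum_i t^i h^{(i)})$ induces the inverse-metric perturbation $\bar g = \bar g^{(0)}(1 + \sum_i t^i h^{(i)})^{-1}$, since the inverse metric is pointwise the reciprocal conformal factor. Thus a line segment $\tau g_1 + (1-\tau) g_2$ in $\mathcal{G}$ corresponds to a path $\tau \mapsto \bar g^{(0)}/(\tau c_1 + (1-\tau) c_2)$ in $\bar{\mathcal{G}}$, where $c_1, c_2$ are the conformal factors of $g_1, g_2$ relative to $g^{(0)}$. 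This path is generally not a straight line in the inverse metrics, so Theorem \ref{InverseConcave} does not apply verbatim; instead I would apply Lemma \ref{BasicGeomResult} directly to this path.

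First I would fix two distinct elements $g_1, g_2$ of a nonempty convex subset of $\mathcal{G}$ and consider the segment $g(\tau) = \tau g_1 + (1-\tau) g_2$, $\tau \in [0,1]$, which by hypothesis lies entirely in $\mathcal{G}$ and hence is isospectral. Writing $g_i = c_i \, g^{(0)}$ for the base metric $g^{(0)} = g_2$ (so $c_2 \equiv 1$), I would expand the induced path of inverse metrics $\bar g(\tau)$ around an arbitrary basepoint $\tau_0 \in [0,1]$ in powers of $s = \tau - \tau_0$, using the convergence guaranteed by \cite{bando1983generic}. The key computation is the second-order Taylor coefficient $f^{(2)}$ of the conformal factor $1/(c_1 s + \text{const})$-type expression: since $\bar g \propto 1/c$ and $c$ is affine in $\tau$, the function $1/c$ is convex in $\tau$ wherever $c>0$, so its second derivative in $s$ is nonnegative, giving $f^{(2)} \geq 0$ — the wrong sign for Lemma \ref{BasicGeomResult} as stated.

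The main obstacle is therefore this sign: Lemma \ref{BasicGeomResult} requires $f^{(2)} \leq 0$, but the reciprocal of an affine positive function has $f^{(2)} \geq 0$. To get around this I would reparametrize the segment from the opposite end, or more cleanly observe that the roles of "metric" and "inverse metric" can be swapped: a convex path in $\mathcal{G}$ is, after passing to inverse metrics, a path whose own inverse-metrics are affine, i.e. $\bar g(\tau)^{-1} = g(\tau)$ is affine in $\tau$. So I would instead run the dual version of the argument: the quantity that is affine along the path is $1/(\text{conformal factor of }\bar g)$, which means the conformal factor $\varphi(\tau)$ of $\bar g$ relative to $\bar g(\tau_0)$ satisfies $1/\varphi$ affine, hence $\varphi$ is \emph{concave} in $\tau$, so its second-order coefficient $f^{(2)} = \varphi''(\tau_0)/2 \leq 0$. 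This is exactly the hypothesis of Lemma \ref{BasicGeomResult}. Applying the lemma yields $f^{(1)} = 0$, i.e. $\varphi$ has vanishing first derivative at every $\tau_0$; combined with $f^{(2)} \leq 0$ and $\varphi$ smooth and positive on the compact interval, $\varphi$ is constant, forcing $\bar g(\tau)$ constant and hence $g_1 = g_2$, a contradiction. The only genuinely new content beyond Theorem \ref{InverseConcave} is the elementary convexity bookkeeping for the reciprocal of an affine function, so the proof should be short.

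\begin{proof}
Assume $g_{1}$ and $g_{2}$ are distinct elements of a nonempty convex subset of $\mathcal{G}$, so that the segment $g(\tau) = \tau g_{1} + (1-\tau) g_{2}$, $\tau \in [0,1]$, lies in $\mathcal{G}$ and is isospectral. In dimension $2$ the inverse metric is the pointwise reciprocal of the conformal factor, so the induced path of inverse metrics satisfies $\bar g(\tau)^{-1} = g(\tau)$, which is affine in $\tau$. Fix any $\tau_{0} \in [0,1]$ and write $\bar g(\tau) = \bar g(\tau_{0})\, \varphi(\tau)$ with $\varphi(\tau_{0}) = 1$ and $\varphi > 0$ a smooth function (depending also on the point of $\mathcal{M}$). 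Then $1/\varphi(\tau)$ is, up to a positive constant, the ratio of the affine functions $g(\tau)/g(\tau_{0})$, hence affine in $\tau$; therefore $\varphi$ is a concave function of $\tau$ on $[0,1]$, so its Taylor expansion $\varphi(\tau) = 1 + \sum_{i=1}^{\infty} (\tau-\tau_{0})^{i} f^{(i)}$ has $f^{(2)} = \tfrac{1}{2}\partial_{\tau}^{2}\varphi(\tau_{0}) \leq 0$. By \cite{bando1983generic} these series converge for $|\tau - \tau_{0}| < \varepsilon$ for some $\varepsilon > 0$, and the path is isospectral to all orders, so Lemma \ref{BasicGeomResult} applies and gives $f^{(1)} = 0$. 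Thus $\partial_{\tau}\varphi(\tau_{0}) = 0$ for every $\tau_{0} \in [0,1]$, so $\varphi$ is constant in $\tau$; since $\varphi(\tau_{0}) = 1$ this forces $\bar g(\tau)$, and hence $g(\tau)$, to be independent of $\tau$, contradicting $g_{1} \neq g_{2}$. Therefore $\mathcal{G}$ contains no convex subset with more than one element.
\end{proof}
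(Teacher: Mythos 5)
There is a genuine gap, and it sits exactly at the point you flagged in your own plan. You correctly observe that along the segment $g(\tau)=\tau g_{1}+(1-\tau)g_{2}$ the conformal factor $c(\tau)$ of the metric is affine and positive, so the conformal factor $\varphi(\tau)=c(\tau_{0})/c(\tau)$ of the induced inverse metric is the reciprocal of a positive affine function and hence \emph{convex}, giving $f^{(2)}\geq 0$ --- the wrong sign for Lemma \ref{BasicGeomResult}. Your proposed fix then asserts that because $1/\varphi$ is affine, $\varphi$ is concave; but this is the same statement you just (correctly) negated: $1/\varphi$ affine and positive forces $\varphi$ convex, not concave. Concretely, $f^{(1)}=-(c_{1}-c_{2})/c(\tau_{0})$ and $f^{(2)}=(c_{1}-c_{2})^{2}/c(\tau_{0})^{2}=(f^{(1)})^{2}\geq 0$, with equality only where $c_{1}=c_{2}$. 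Reparametrizing from the other end of the segment does not change this, since it merely swaps $g_{1}$ and $g_{2}$. So Lemma \ref{BasicGeomResult} is simply not applicable here, and the contradiction you derive rests on a sign error.

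The paper's proof handles precisely this obstruction without the lemma. It uses the identity $f^{(2)}=(f^{(1)})^{2}$ together with the first-order isospectrality condition $\langle \psi_{n}^{(0)},f^{(1)}\psi_{n}^{(0)}\rangle=0$: expanding $f^{(1)}\psi_{n}^{(0)}$ in the eigenbasis (the diagonal term drops out) gives
\begin{equation*}
\langle \psi_{n}^{(0)},(f^{(1)})^{2}\psi_{n}^{(0)}\rangle=\sum_{i\neq n}|\langle \psi_{i}^{(0)},f^{(1)}\psi_{n}^{(0)}\rangle|^{2},
\end{equation*}
and substituting this into the second-order eigenvalue formula makes the a priori positive $f^{(2)}$-term combine with the sum so that
\begin{equation*}
\lambda_{n}^{(2)}=\sum_{i\neq n}\frac{(\lambda_{n}^{(0)})^{2}\,|\langle \psi_{i}^{(0)},f^{(1)}\psi_{n}^{(0)}\rangle|^{2}}{\lambda_{n}^{(0)}-\lambda_{i}^{(0)}},
\end{equation*}
which again has fixed-sign numerators and so is amenable to the inductive argument of Section \ref{BabyExample}, yielding $f^{(1)}=0$. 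If you want to salvage your reduction-to-the-lemma strategy, you would need to strengthen Lemma \ref{BasicGeomResult} to cover the case $f^{(2)}=(f^{(1)})^{2}$, which is exactly what the paper's separate computation accomplishes.
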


\begin{proof}
Consider a perturbation of the metric of the form $g= g^{(0)}(1+tf^{(1)})$. For small $t$, this induces a perturbation of the inverse metric of the form $\bar{g} = \bar{g}^{(0)}(1-tf^{(1)}+t^{2} (f^{(1)})^{2}+...)$.
 Then, by assumption of isospectrality, the first order eigenvalue correction yields $\langle \psi_{n}^{(0)},f^{(1)} \psi_{n}^{(0)}  \rangle =0$. Together with the choice of eigenbasis described in Section \ref{BabyExample}, this yields 

\begin{equation}
f^{(1)} \psi_{n}^{(0)} = \sum_{\substack{i\\ \lambda^{(0)}_{i} \neq \lambda_{n}^{(0)} }} \langle \psi_{n}^{(0)}, f^{(1)} \psi_{i}^{(0)} \rangle \psi_{i}^{(0)} .
\end{equation}

\noindent Consequently,

\begin{equation}
\langle \psi_{n}^{(0)}, (f^{(1)})^{2} \psi_{n}^{(0)} \rangle = \sum_{ \substack{i\\ \lambda^{(0)}_{i} \neq \lambda_{n}^{(0)} } } \left| \langle \psi_{i}^{(0)}, f^{(1)}\psi_{n}^{(0)} \rangle \right|^{2}
\end{equation}

\noindent This allows us to express the second order correction to the eigenvalues as

\begin{equation}
\begin{split}
    \lambda_{n}^{(2)} &= \sum_{ \substack{i\\ \lambda^{(0)}_{i} \neq \lambda_{n}^{(0)} } } \left( \frac{\lambda_{i}^{(0)}\lambda_{n}^{(0)} \left| \langle \psi_{i}^{0}, f^{(1)} \psi_{n}^{(0)} \rangle \right|^{2} }{\lambda_{n}^{(0)} - \lambda_{i}^{(0)}} + \lambda_{n}^{(0)} \left| \langle \psi_{i}^{(0)}, f^{(1)} \psi_{n}^{(0)} \rangle \right|^{2} \right) \\
    &= \sum_{\substack{i\\ \lambda^{(0)}_{i} \neq \lambda_{n}^{(0)} }} \frac{(\lambda_{n}^{(0)})^{2} \left|\langle \psi_{i}^{(0)} , f^{(1)} \psi_{n}^{(0)}\rangle \right|^{2}}{\lambda_{n}^{(0)}-\lambda_{i}^{(0)}}
\end{split}
\end{equation}

\noindent Once again, we obtain an expression suitable for the inductive argument of Section \ref{BabyExample}. Thus, $f^{(1)} =0$. This rules out isospectral straight lines in the space of metrics. The remainder of the proof is identical to that of Theorem \ref{InverseConcave}.
\end{proof}

It is interesting to note that this argument does not depend on the condition $f^{(2)} \leq 0$ used in Lemma \ref{BasicGeomResult} and thus in the proof of Theorem \ref{InverseConcave}.  Indeed, in this case, $f^{(2)} = (f^{(1)})^{2} \geq 0$. Still, the expressions for the eigenvalue corrections conspire in a surprisingly nice way to yield the desired result.

Finally, note that the above theorems consider distinct isometric metrics as different objects. Thus, the isospectral sets in the above results can, if one so chooses, include metrics equivalent by isometry. Consequently, sets of conformally equivalent isometric metrics do not contain convex subsets.

\section{Outlook}

We have shown that sets of conformally equivalent isospectral metrics on surfaces contain no convex subsets. In other words, we ruled out the existence of any isospectral paths of the form $(1+tf)g_{0}$ with $f \in C^{\infty}(\mathcal{M})$.
This is an indication of the rarity of metrics that admit nontrivial isospectral deformations.
Note that, in dimension $d=2$, manifolds of negative curvature cannot be isospectrally deformed \cite{guillemin1980some}.
In fact, to the best of our knowledge, there are no known continuous isospectral families in dimension $d=2$.
Our result can thus be seen as evidence to conjecture that no such continuous families exist at all.

Beyond continuous families, our original motivation was to take a step towards a proof of the rarity of any isospectral non-isometric manifolds.
We will now discuss how our result might provide such a step. 
It is expected that isospectral non-isometric metrics are rare in the sense that they form a meagre set in some topology. See \cite{zelditch2009inverse,hezari2010inverse,zelditch1998revolution} for specific settings in which non-isometric isospectral geometries are rare in that sense. Recall that meagre sets are countable unions of nowhere dense sets and that nowhere dense sets are sets whose closure has empty interior.
Isospectral sets are known to be closed in appropriate topologies \cite{osgood1988compact} and our non-convexity result implies that isospectral sets of conformally equivalent metrics on surfaces have no interior. Consequently, our result implies that isospectral sets are nowhere dense.
If it were the case that there are only countably many isospectral sets containing non-isometric metrics, in other words that there are only countably many families of counterexamples to the reconstruction of shape from spectrum, one would thus establish that counterexamples form a meagre set.
One could then safely neglect the existence of counterexamples in most applications.
This, however, does not hold. Indeed, given a pair of isospectral non-isometric metrics, one can multiply both by the same positive number to obtain another pair of isospectral non-isometric metrics.
The spectrum of the new pair will be different as long as the factor is not equal to $1$.
In this way, uncountably many counterexamples can be constructed.
Nonetheless, it is expected that the counterexamples form a meagre set.
Consequently, we conjecture that a proof of the meagreness of non-isometric isospectral metrics would require that isospectral sets possess a property stronger than being nowhere dense.

Notice that the non-convexity result proven here is such a property.
In order to prove the nowhere density of an isospectral set $\mathcal{G}$ it is sufficient to show that at each $g_{0} \in \mathcal{G}$ there exists a continuous path of metrics $g(t)$ such that $g(0)=g_{0}$ and that lies outside the isospectral set for $t \neq 0$.
This can be easily achieved by setting $g(t)=(1+t)g_{0}$.
This family is guaranteed to exit the isospectral set for arbitrarily small $|t|$ since this perturbation of the metric modifies the volume of manifold, a quantity well-known to be determined by the spectrum.
Our non-convexity result is stronger in that it rules out all isospectral paths of the form $(1+tf)g_{0}$ with $f \in C^{\infty}(\mathcal{M})$. It thus goes beyond the minimal requirements to prove that an isospectral set is nowhere dense.
Whether this is enough to establish the meagreness of isospectral non-isometric metrics is the subject of ongoing research.

A natural further direction to pursue is to attempt to generalize the non-convexity results to non-conformal peturbations and metrics on manifolds of higher dimensions. A degrees of freedom counting argument suggests that this is likely to require the use of spectra of Laplacians on fields other than scalar fields, such as Hodge Laplacians on $p-$forms or Laplacians on covariant symmetric $2-$tensors \cite{kempf2010spacetime}.
A more involved challenge will be to generalize results of this type to metrics of indefinite signature, especially the Lorentzian metrics used in relativity.
There, the Laplacian or, more accurately, d'Alembertian is not an elliptic operator, which will require careful regularization in order to obtain a discrete spectrum.

\section{Acknowledgements}
A.K. acknowledges support through the Discovery program of the Natural Sciences and Engineering Research Council of Canada (NSERC). M.P. acknowledges support through NSERC's PGS-D program and wishes to thank Alexandre Girouard for his useful comments. The authors also wish to thank Spiro Karigiannis whose input helped clarify the presentation of this paper.


\bibliographystyle{ws-ijmpa}
\bibliography{GenericBibliography}

\end{document}